\theoremstyle{plain}
\newtheorem{lemma}{Lemma}
\theoremstyle{definition}
\newtheorem*{definition}{Definition}
\newtheorem{example}{Example}
\def \cC {\mathcal{C}}
\def \cG {\mathcal{G}}
\def \cP {\mathcal{P}}
\def \FF {\mathbb{F}}
\def\Plus{+}
\newcommand{\AND}{\mathbin{\textup{\texttt{\&}}}}
\DeclareMathOperator{\Span}{span}
\DeclareMathOperator{\Length}{length}
\begin{document}

\title{\rmfamily\normalfont\textsc{(Re)constructing Code Loops}}
\author{\textsc{Ben Nagy and David Michael Roberts}}
\date{10 March 2020}

\maketitle

\begin{abstract}
The Moufang loop named for Richard Parker is a central extension of the extended binary Golay code. 
It the prototypical example of a general class of nonassociative structures known today as \emph{code loops}, which have been studied from a number of different algebraic and combinatorial perspectives.
This expository article aims to highlight an experimental approach to computing in code loops, by a combination of a small amount of precomputed information and making use of the rich identities that code loops' twisted cocycles satisfy.
As a byproduct we demonstrate that one can reconstruct the multiplication in Parker's loop from a mere fragment of its twisted cocycle. 
We also give relatively large subspaces of the Golay code over which Parker's loop splits as a direct product.
\end{abstract}

\section{Introduction.}

Associativity is one of the standard axioms for groups, along with inverses and the identity element.
At first it can be difficult to imagine what algebraic structures could be like that are \emph{not} associative. 
However, we are all familiar with the nonassociative operation of exponentiation:  $(2^3)^{\!{}^2} \neq 2^{(3^2)}$, for example.
But exponentiation is an ill-behaved binary operation, with neither (a two-sided) inverse nor identity element, hence very much unlike a group. 
One class of nonassociative structures as close to being a group as possible are \emph{Moufang loops}, which are more or less ``groups without full associativity'', having an identity element and two-sided inverses.
This article is concerned with a special class of Moufang loops now known as \emph{code loops}, which have curious connections to sporadic finite simple groups \cite{Conway}, \cite[\S 7]{Griess87}.
In particular, we consider in detail a Moufang loop $\cP$, introduced by Richard Parker, in unpublished work in the early 1980s, as the exemplar of a class of loops later shown to be equivalent to code loops \cite{Griess}.

An early example of a code loop, though not identified as such, appeared in Coxeter's work on integral octonions \cite{Coxeter} (see, e.g., \cite[\S 2]{Griess87} for details), but the most striking appearance was the use of Parker's loop $\cP$ as part of John Conway's construction of the Monster sporadic simple group \cite{Conway}. 
A general study of loops of this type was subsequently made by Robert Griess \cite{Griess}, who named them, based on the fact that they can be built from doubly-even binary codes. 
For example, Parker's loop $\cP$ is then constructed by starting with the famous extended Golay code.
Griess also proved the existence of code loops by an algorithmic construction, which we adopt below.
(More recent approaches will be discussed in Section~\ref{sec:discussion}.)

The data required to describe a code loop is a function $\theta\colon \cC\times \cC \to \FF_2$, where $\FF_2 = \{0,1\}$ is the field with two elements, and $\cC\subset (\FF_2)^n$ is a doubly-even binary code---a subspace with a particular property that we recall below. 
The function $\theta$ must then satisfy a number of identities that use this property.
For the sake of terminology, we refer to the elements of $\FF_2$ as \emph{bits}, and $\theta$ is called a \emph{twisted cocycle}.
Note, however, that both Conway's and Griess's treatment of Parker's loop is rather implicit, merely using the properties of $\theta$, not examining its structure. 
The point of the present article is to give a concrete and efficient approach to code loops in general, and Parker's loop in particular---the latter especially so, given its use in constructing the Monster group.

Recall that the elements (or \emph{words}) in a code $\cC$, being vectors, can be combined by addition---this is a group operation and hence associative. 
The elements of a code loop consist of a pair: a code word and one extra bit.
The extra bit, together with $\theta$, \emph{twists} the addition so that the binary operation $\star$ in the code loop is a \emph{nonassociative operation}: $(x\star y)\star z\not=x\star (y\star z)$ in general.

While addition of words in a code is performed by coordinate-wise addition in $\FF_2$ (that is, bitwise XOR), the algebraic operation in a code loop is not easily described in such an explicit way---unless one has complete knowledge of the function $\theta$.
It is the computation and presentation of such twisted cocycles that will mainly concern us in this article, using Griess's method \cite[proof of Theorem 10]{Griess}.
As a result, we will observe some curious features of Parker's loop, obtained via experimentation and, it seems, previously unknown.

The function $\theta$ can be thought of as a table of bits, or even as a square image built from black and white pixels, with rows and columns indexed by elements of $\cC$. 
One of the main results given here is that $\theta$ can be reconstructed from a much smaller subset of its values. 
If $\dim\cC = 2k$, then while $\theta$ is a table of $(2^{2k})^2$ values, we need only store roughly $2^{2k}$ values, and the rest can be reconstructed from equation (\ref{eq:theformula}) in Lemma~\ref{lemma:formula lemma} below. 
For Parker's loop this means that instead of storing a multiplication table with approximately $67$ million entries, one only need store a $128\times 128$ table of bits. 
This table, given in Figure~\ref{fig:Parker cocycle}, exhibits several structural regularities that simplify matters further.

We begin the article with a treatment that (re)constructs a small nonabelian finite \emph{group} using a cocycle on a 2-dimensional $\FF_2$-vector space.
This will exhibit the technique we will use to construct code loops, in the setting of undergraduate algebra.

\section{Extensions and cocycles.}

Recall that the \emph{quaternion group} $Q_8$, usually introduced in a first course in group theory, is the multiplicative group consisting of the positive and negative basis quaternions:
\[
	Q_8 = \{1,\, i,\, j,\, k,\,-1,\, -i,\, -j,\, -k\}.
\]
The elements of $Q_8$ satisfy the identities
\[
	i^2 = j^2 = k^2 = -1, \quad ij=k.
\]
There is a surjective group homomorphism $\pi\colon Q_8 \to \FF_2 \times \FF_2 =: V_4$ (the Klein 4-group), sending $i$ to $(1,0)$ and $j$ to $(0,1)$, and the kernel of $\pi$ is the subgroup $(\{1,-1\},\times)\simeq (\FF_2,+)$.
Moreover, this kernel is the \emph{center} of $Q_8$, the set of all elements that commute with every other element of the group. This makes $Q_8$ an example of a \emph{central extension}: $\FF_2\to Q_8 \to V_4$.

Now $Q_8$ is a nonabelian group, but both $\FF_2$ and $V_4$ are abelian.
One might think that it shouldn't be possible to reconstruct $Q_8$ from the latter two groups, but it is! 
That is, if we are given some extra information that uses only the two abelian groups.
There is a function $s\colon V_4 \to Q_8$, called a \emph{section}, defined by 
\[
(0,0) \mapsto 1\qquad
(1,0) \mapsto i\qquad
(0,1) \mapsto j\qquad
(1,1) \mapsto k.
\]
This \emph{almost} looks like a group homomorphism, but it is not, as $(1,0) + (1,0) = (0,0)$ in $V$, but $s(1,0)s(1,0) = i^2 \not= 1 = s(0,0)$ in $Q_8$.
We can actually measure the failure of $s$ to be a group homomorphism by considering the two-variable function
\[
	d\colon V_4 \times V_4 \to \FF_2
\]
defined by $ (-1)^{d(v,w)} = s(v)s(w)s(v+w)^{-1}$. 
This function is called a \emph{cocycle}, or more properly a 2-cocycle (for a treatment of the general theory, see, e.g., \cite{Brown}, and Chapter IV in particular).
It is a nice exercise to see that $s(v)s(w)s(v+w)^{-1}$ is always $\pm 1$, so that this definition makes sense. The values of $d(v,w)$ are given as

\begin{center}
\begin{tabular*}{0.35\textwidth}{c|cccc}
$v\setminus w$&$00$&$10$&$01$&$11$\\
\hline
	$00$		& $0$& $0$& $0$& $0$\\
	$10$		& $0$& $1$& $1$& $0$\\
	$01$		& $0$& $0$& $1$& $1$\\
	$11$		& $0$& $1$& $0$& $1$\\
\end{tabular*}
\end{center}
where we write $00$ for $(0,0)$, $10$ for $(1,0)$, and so on.
Clearly, if $s$ \emph{were} a homomorphism, $d$ would be the zero function.
One can check that $d$ satisfies the \emph{cocycle identities}
\begin{equation}\label{eq:cocycle_eqn}
	d(v,w)-d(u+v,w)+d(u,v+w)-d(u,v) = 0
\end{equation}
for all triples $u,v,w\in V_4$. 
It is also immediate from the definition that $d(0,0)=0$.
An alternative visualization is given in Figure~\ref{fig:cocycle for q8}.

\begin{figure}[!ht]
\begin{center}
\includegraphics[height=2.5cm]{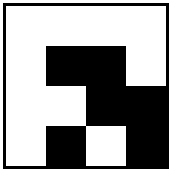}
\end{center}
\caption{A $4\times4$ array giving the values of the cocycle $d\colon V_4\times V_4\to \FF_2$, with white = $0$, black = $1$. The order of the row/column labels is $00$, $10$, $01$, $11$.}
\label{fig:cocycle for q8}
\end{figure}

The reason for this somewhat mysterious construction is that we can build a bijection of \emph{sets} using $s$ and the group isomorphism $\FF_2\simeq \{1,-1\}$, namely the composite
\[
	\phi\colon \FF_2\times V_4 \simeq \left(\{1\}\times V_4\right) \cup \left(\{-1\} \times V_4\right)\! \to\!
	\{1,\, i,\, j,\, k\}\cup \{-1,\, -i,\, -j,\, -k\} = Q_8\,.
\]
Now, if we define a new product operation using the cocycle $d$ on the underlying set of $\FF_2\times V_4$ by
\[
	(s,v)\ast_d(t,w):=(s+ t+ d(v,w),v+w),
\]
then the cocycle identities ensure that this is in fact associative and further, a group operation.
Finally, $\phi$ can be checked to be a homomorphism for multiplication in $Q_8$ and for $\ast_d$, hence is a group isomorphism.

Thus we can reconstruct, at least up to isomorphism, the nonabelian group $Q_8$ from the two abelian groups $V_4$ and $\FF_2$, together with the cocycle
\[
	d\colon V_4\times V_4\to \FF_2.
\]
If we didn't know about the group structure of $Q_8$ already, we could construct it from scratch using $d$.
We can also build loops (particularly code loops) in the same way, which we will now outline.

\section{Twisted cocycles and loops.}

The construction in the previous section is a fairly typical case of reconstructing a central extension from a cocycle (although in general one does not even need the analog of the group $V_4$ to be abelian). 
However, we wish to go one step further, and construct a structure with a \emph{nonassociative} product from a pair of abelian groups: the group $\FF_2$ and additive group of a vector space $V$ over $\FF_2$.
Instead of a cocycle, we use a \emph{twisted} cocycle: a function $\alpha\colon V\times V \to \FF_2$ like $d$ that, instead of (\ref{eq:cocycle_eqn}), satisfies
\[
	\alpha(v,w)-\alpha(u+v,w)+\alpha(u,v+w)-\alpha(u,v) = f(u,v,w)
\]
for a nonzero \emph{twisting function} $f\colon V\times V\times V \to \FF_2$. We will assume that $\alpha$ satisfies $\alpha(\underline{0},v)=\alpha(v,\underline{0}) = 0$ for all $v\in V$, a property that holds for the cocycle $d$ in the previous section.
The ``twisted cocycle'' terminology comes from geometry, where they appear in other guises; another term used is \emph{factor set}.

From a twisted cocycle $\alpha$ the set $\FF_2 \times V$ can be given a binary operation $\ast_\alpha$:
\begin{equation}\label{eq:loop_extension_prod}
	(s,v)\ast_\alpha(t,w):=(s+ t+ \alpha(v,w),v+w).
\end{equation}
We denote the product $\FF_2 \times V$ of sets equipped with this binary operation by $\FF_2\times_\alpha V$. If the twisting function $f$ is zero, then $\alpha$ is a cocycle, $\ast_\alpha$ is an associative operation, and $\FF_2\times_\alpha V$ is a group.

\begin{definition}
A \emph{loop} is a set $L$ with a binary operation $\star\colon L\times L \to L$, a unit element $e\in L$ such that $e\star x = x \star e = x$ for all $x\in L$, and such that for each $z\in L$, the functions $r_z(x) = x \star z$ and $\ell_z(x)=z\star x$ are bijections $L\to L$. A \emph{homomorphism} of loops is a function preserving multiplication and the unit element.
\end{definition}

The conditions on $r_x$ and $\ell_x$ mean that every element $x\in L$ has a left inverse $x_L^{-1}$ and a right inverse $x_R^{-1}$ for the operation $\star$, and these are unique---but may be different in general. 
The following is a worthwhile exercise using the twisting function and the assumption that $\alpha(\underline{0},v)=\alpha(v,\underline{0})=0$.

\begin{lemma}
The operation $\ast_\alpha$ makes $\FF_2\times_\alpha V$ into a loop, with identity element $(0,\underline{0})$.
The projection function $\pi\colon \FF_2\times_\alpha V \to V$ is a surjective homomorphism of loops, with kernel $\FF_2 \times\{\underline{0}\}$.
\end{lemma}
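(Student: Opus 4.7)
The plan is to verify each loop axiom directly from the definition~(\ref{eq:loop_extension_prod}), and then to dispatch the homomorphism and kernel claims by inspecting the second coordinate of $\ast_\alpha$.

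For the identity axiom, I compute $(0, \underline{0}) \ast_\alpha (t, w) = (0 + t + \alpha(\underline{0}, w), \underline{0} + w) = (t, w)$, using the normalization $\alpha(\underline{0}, w) = 0$; the symmetric calculation gives $(t, w) \ast_\alpha (0, \underline{0}) = (t, w)$ from $\alpha(w, \underline{0}) = 0$. Hence $(0, \underline{0})$ is a two-sided unit.

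Next, fix $(t, w) \in \FF_2 \times V$ and solve $(s, v) \ast_\alpha (t, w) = (s', v')$ for $(s, v)$. The second coordinate forces $v = v' + w$, the unique solution in the abelian group $V$; substituting, the first coordinate then forces $s = s' + t + \alpha(v' + w, w)$, the unique solution in $\FF_2$. This shows $r_{(t,w)}$ is a bijection, and the mirror argument (solving $(t, w) \ast_\alpha (s, v) = (s', v')$) shows $\ell_{(t,w)}$ is too. For $\pi$, compatibility with multiplication follows because the second coordinate of $(s, v) \ast_\alpha (t, w)$ is exactly $v + w$, and $\pi(0, \underline{0}) = \underline{0}$, so $\pi$ is a loop homomorphism. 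Surjectivity is immediate from $\pi(0, v) = v$, and $\pi(s, v) = \underline{0}$ forces $v = \underline{0}$, giving $\ker \pi = \FF_2 \times \{\underline{0}\}$.

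There is no real obstacle: the verification is essentially mechanical, since the loop axioms are much weaker than associativity and can be read off by solving two linear equations in sequence. It is worth noting, however, that despite the lemma's hint, the twisting function $f$ is nowhere invoked in the above---only the normalization $\alpha(\underline{0}, -) = \alpha(-, \underline{0}) = 0$ does any work. The defect measured by $f$ would first become relevant when quantifying the failure of $\ast_\alpha$ to be associative, or when checking stronger identities such as Moufang's.
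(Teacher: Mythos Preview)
Your proof is correct. The paper does not actually prove this lemma; it is left as ``a worthwhile exercise using the twisting function and the assumption that $\alpha(\underline{0},v)=\alpha(v,\underline{0})=0$'', so there is no argument to compare against beyond that hint. Your closing remark is on point: despite the paper's phrasing, the twisting function $f$ plays no role in verifying the loop axioms or the homomorphism claims---only the normalization of $\alpha$ is used---and $f$ enters only when one asks about associativity or the Moufang identity.
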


Groups are examples of loops, but they are, in a sense, the uninteresting case. 
Arbitrary loops are quite badly behaved: even apart from the product being nonassociative, left and right inverses may not coincide, and associativity can fail for iterated products of a single element. 
But there is a special case introduced by Ruth Moufang \cite{Moufang}, with better algebraic properties, while still permitting nonassociativity.

\begin{definition}
A \emph{Moufang loop} is a loop $(L,\star)$ satisfying the identity
\[
x \star (y \star (x \star z)) = ((x \star y) \star x) \star z
\]
for all choices of elements $x,y,z\in L$.
\end{definition}

The most famous example of a Moufang loop is probably the set of non-zero octonions under multiplication, although there are many less-known finite examples. 
A key property of a Moufang loop $L$ is that any subloop $\langle x,y\rangle < L$ generated by a pair of elements $x,y$ is in fact a group. 
By a \emph{subloop} of $L$ we mean a subset containing $e$ that is closed under the operation $\star$.
As a corollary, powers of a \emph{single} element are well-defined, and do not require extra bracketing: $x\star (x \star x) = (x\star x) \star x =: x^3$, for example. 
Additionally, the left and right inverses always agree in a Moufang loop, so that for each $x\in L$ there is a unique $x^{-1}$ such that $x\star x^{-1} = x^{-1}\star x = e$. 
Code loops, defined below as a special case of the construction of $\FF\times_\alpha V$, are examples of Moufang loops.

\begin{example}\label{example:m16}
Let $V= (\FF_2)^3$. The 16-element Moufang loop $M := M_{16}(C_2\times C_4)$ of \cite[Theorem 2]{Chein} is isomorphic to $\FF_2 \times_\mu V$,  where  $\mu\colon V\times V\to\FF_2$  is the twisted cocycle given by the $8\times 8$ array of bits in Figure~\ref{fig:cocycle for M}.
\medskip
\begin{figure}[!ht]
\begin{center}
\includegraphics[width=0.25\textwidth]{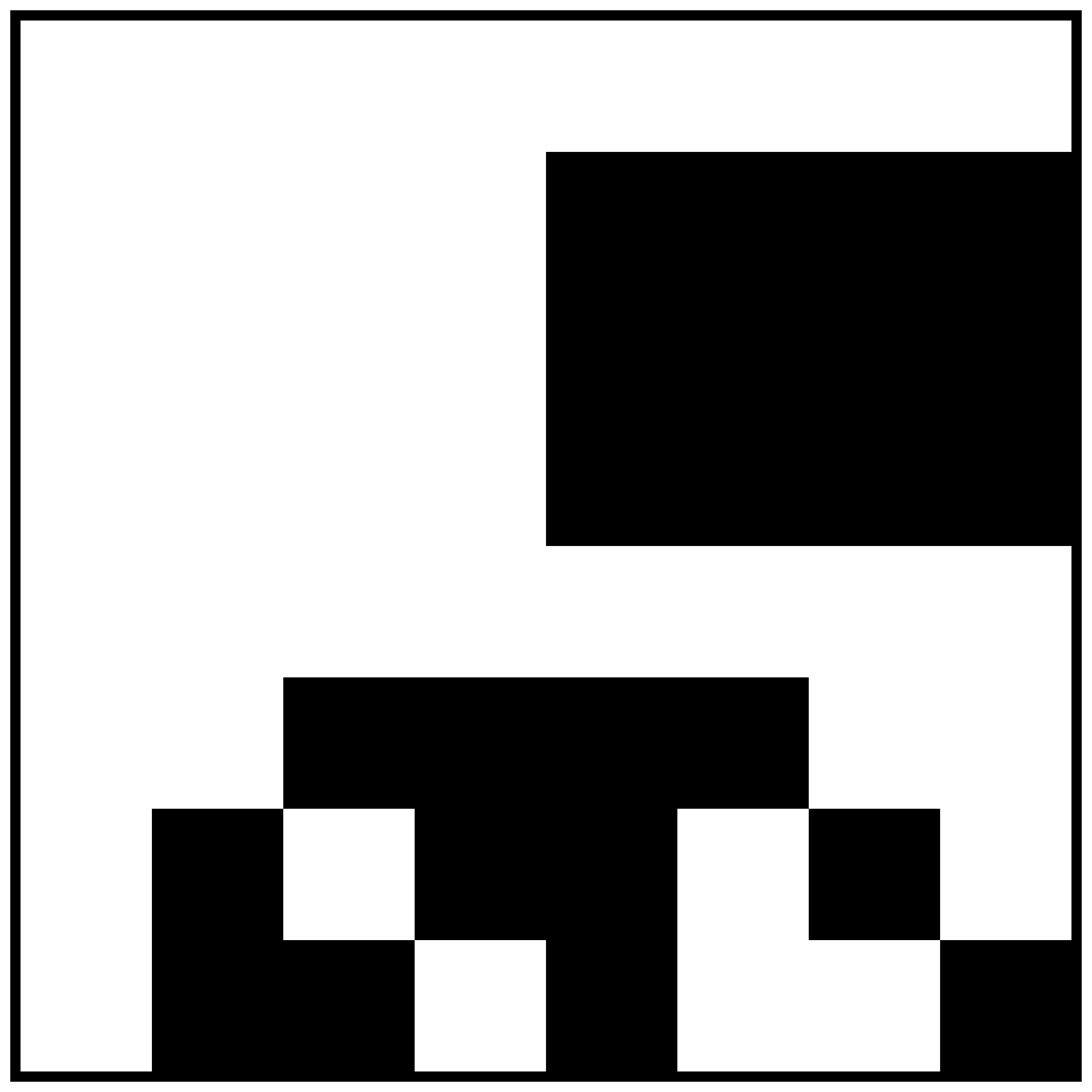}
\end{center}
\caption{Twisted cocycle for the Moufang loop $M$ of Example~\ref{example:m16}; white = $0$, black = $1$. The order of the row/column labels is $000$, $100$, $010$, $110$, $001$, $101$, $011$, $111$. }\label{fig:cocycle for M}
\end{figure}

Notice that the first four columns/rows correspond to the subgroup $U\subset V$ generated by $100$ and $010$, and that the restriction of $\mu$ to $U\times U$ is identically zero (i.e., white). 

This means that the \emph{restriction} $M\big|_U < M$---the subloop of $M$ whose elements are mapped to $U$ by $M \simeq \FF_2\times_\mu V \to V$---is isomorphic to the direct product $\FF_2\times U$, using the definition (\ref{eq:loop_extension_prod}) of the product, and in particular is a group.

\end{example}

\section{Codes and code loops.}

To describe the twisting function $f$ for our code loops, we need to know about some extra operations that exist on vector spaces over the field $\FF_2$. 
For $W$ an $n$-dimensional vector space over $\FF_2$ and vectors $v,w\in W$, there is a vector $v\AND w \in W$ given by
\[
	v\AND w := (v_1w_1,\,v_2w_2,\,\ldots,\,v_nw_n).
\]
If we think of such vectors as binary strings, then this operation is bitwise AND (hence the notation). 
Note that if we take a code $\cC \subset (\FF_2)^n$, then $\cC$ is not guaranteed to be closed under this operation.
The other operation takes a vector $v\in W$ and returns its \emph{weight}: the sum, as an integer, of its entries: $|v| := v_1 + \cdots + v_n$. 
Equivalently, it is the number of nonzero entries in $v$.

The twisting function for a code loop is then a combination of these two, namely $f(u,v,w) := |u\AND v\AND w|$.
However, as alluded to above, we are going to ask that further identities hold. 
For these identities to make sense we need to start with a code with the special property of being \emph{doubly even}.

\begin{definition}
A code $\cC \subset (\FF_2)^n$ is \emph{doubly even} if for every word $v\in \cC$, $|v|$ is divisible by 4. 
\end{definition}

\begin{example}\label{example:Hamming}
The \emph{Hamming (8,4) code} is the subspace $\mathcal{H} \subset (\FF_2)^8$ spanned by the four row vectors
\[
\begin{array}{c}
1\,0\,0\,0\,0\,1\,1\,1\\
0\,1\,0\,0\,1\,0\,1\,1\\
0\,0\,1\,0\,1\,1\,0\,1\\
0\,0\,0\,1\,1\,1\,1\,0
\end{array}
\]
and is doubly even.
\end{example}

A more substantial example of a doubly-even code is the following.

\begin{example}\label{example:Golay}
The (extended binary) Golay code $\cG \subset (\FF_2)^{24}$ is the span of the following (row) vectors, denoted $b_1,\ldots,b_6$ (left) and $b_7,\ldots,b_{12}$ (right):
\[	
	 \begin{array}{c}
     0\,0\,0\,1\,1\,0\,0\,0\,0\,0\,0\,0\,0\,1\,0\,1\,1\,0\,1\,0\,0\,0\,1\,1 \\
     1\,0\,1\,0\,0\,1\,1\,1\,1\,1\,0\,1\,1\,0\,1\,1\,1\,1\,1\,1\,0\,0\,0\,1 \\
     0\,0\,0\,1\,0\,0\,0\,0\,0\,0\,0\,0\,1\,0\,0\,1\,0\,0\,1\,1\,1\,1\,1\,0 \\
     0\,1\,0\,0\,0\,0\,0\,0\,0\,0\,1\,0\,0\,0\,0\,1\,1\,0\,1\,0\,1\,1\,0\,1 \\
     0\,0\,0\,0\,0\,0\,0\,0\,0\,0\,1\,0\,0\,1\,0\,1\,0\,1\,0\,1\,0\,1\,1\,1 \\
     1\,0\,0\,0\,0\,0\,0\,0\,0\,0\,0\,0\,1\,0\,0\,1\,1\,1\,1\,1\,0\,0\,0\,1
     \end{array}
\quad
    \begin{array}{c}
	 1\,0\,1\,0\,0\,1\,0\,1\,1\,1\,0\,0\,1\,1\,1\,0\,0\,1\,1\,1\,1\,1\,1\,1 \\
	 1\,0\,0\,0\,0\,0\,0\,1\,1\,1\,0\,0\,0\,0\,1\,0\,0\,1\,0\,0\,1\,1\,0\,0 \\
	 0\,0\,0\,0\,0\,1\,0\,0\,0\,0\,0\,0\,1\,1\,1\,0\,0\,1\,0\,0\,1\,1\,1\,0 \\
	 1\,0\,0\,0\,0\,0\,0\,0\,1\,0\,0\,0\,1\,1\,1\,0\,0\,0\,1\,1\,1\,0\,0\,0 \\
	 1\,0\,0\,0\,0\,0\,0\,0\,0\,1\,0\,0\,1\,0\,1\,0\,0\,0\,0\,1\,0\,1\,1\,1 \\
	 0\,1\,1\,0\,1\,1\,0\,0\,0\,0\,0\,1\,1\,1\,1\,0\,1\,1\,1\,1\,1\,1\,1\,1
	 \end{array}
\]
This basis is different from the more usual ones (e.g., \cite[Figure 3.4]{SPLAG}), which can be taken as the rows of a $12\times 24$ matrix whose left half is the $12\times 12$ identity matrix.
Our basis, however, allows us to demonstrate some interesting properties.
\end{example}

The inclusion/exclusion formula applied to counting nonzero entries allows us to show that, for all $v$ and $w$ in any doubly even code $\cC$,
\[
	|v+w| + |v\AND w| = |v| + |w| - |v\AND w|\,.
\]
In other words: $|v\AND w| = \frac12(|v| + |w| - |v+w|)$, which implies that $|v\AND w|$ is divisible by $2$.
Thus for words $v,w$ in a doubly even code, both $\frac14|v|$ and $\frac12|v\AND w|$ are integers.

\begin{definition}[Griess \cite{Griess}]
\label{def:codeLoop}
Let $\cC$ be a doubly even code. 
A \emph{code cocycle}\footnote{Griess uses the alternative term ``factor set''.} is a function $\theta\colon \cC \times \cC \to \FF_2$ satisfying the identities
\begin{align}
& \theta(v,w)-\theta(u+v,w)+\theta(u,v+w)-\theta(u,v) =|u\AND v \AND w| \pmod 2; \label{eq: code cocycle 1}\\
& \theta(v,w)+\theta(w,v) = {}  \tfrac12|v\AND w| \pmod 2; \label{eq: code cocycle 2}\\
& \theta(v,v) = {}  \tfrac14|v| \pmod 2.\label{eq: code cocycle 3}
\end{align}
A \emph{code loop} is then a loop arising as $\FF_2\times_\theta \cC$ (up to isomorphism) for some code cocycle $\theta$.
\end{definition}

\begin{example}
Define $\cC := \Span\{b_{10},b_{11},b_{12}\}$, using the basis vectors from Example~\ref{example:Golay}. Then the composite function $\cC\times \cC \simeq (\FF_2)^3\times (\FF_2)^3 \xrightarrow{\mu} \FF_2$, using the twisted cocycle $\mu$ in Figure~\ref{fig:cocycle for M}, is a code cocycle. This makes the loop $M$ from Example~\ref{example:m16} a code loop.
\end{example}

There is a notion of what it means for two twisted cocycles to be equivalent, and equivalent twisted cocycles give isomorphic loops.
As part of \cite[Theorem 10]{Griess}, Griess proves that all code cocycles for a given doubly even code are equivalent, and hence give isomorphic code loops.

\section{Griess's algorithm and its output.}

The algorithm that Griess describes in the proof of \cite[Theorem 10]{Griess} to construct code cocycles for a code $\cC$ takes as input an ordered  basis $\{b_0,\ldots,b_{k-1}\}$ for $\cC$. 
A code cocycle is then built up inductively over larger and larger subspaces $V_i = \Span\{b_0,\ldots,b_i\}$.

However, the description by Griess is more of an outline, using steps like ``determine the cocycle on such-and-such subset using identity X'', where X refers to one of (\ref{eq: code cocycle 1}), (\ref{eq: code cocycle 2}), (\ref{eq: code cocycle 3}), or corollaries of these. We have reconstructed the process in detail in Algorithm~\ref{Griess algo}.

We implemented Algorithm~\ref{Griess algo} in the language Go \cite{RN_GH}, together with diagnostic tests, for instance to verify the Moufang property.
The output of the algorithm is a code cocycle, encoded as a matrix of ones and zeros, and can be displayed as an array of black and white pixels. There are steps where an arbitrary choice of a single bit is allowed; we consistently took this bit to be 0.
For the Golay code this image consists of slightly over 16 million pixels.

As a combinatorial object, a code cocycle $\theta\colon \cG \times \cG\to \FF_2$ constructed from Algorithm~\ref{Griess algo} using the basis in Example~\ref{example:Golay} can be too large and unwieldy to examine for any interesting structure. 
Moreover, to calculate with Parker's loop $\cP := \FF_2\times_\theta \cG$ one needs to know all 16~million or so values of $\theta$.
It is thus desirable to have a method that will calculate values of $\theta$ by a method shorter than Algorithm~\ref{Griess algo}.

\begin{lemma}\label{lemma:formula lemma}
Let $\cC$ be a doubly even code and $\theta$ a code cocycle on it. 
Given $\cC = V\oplus W$ a decomposition into complementary subspaces, then for $v_1,v_2\in V$ and $w_1,w_2\in W$,
\begin{align}\label{eq:theformula}
	\theta(v_1\Plus w_1,v_2\Plus w_2)	
		 =&\ \theta(v_1,v_2)  + \theta(w_1,w_2) + \theta(v_1,w_1) \\
		&+ \theta(w_2,v_2) + \theta(v_1\Plus v_2,w_1\Plus w_2) \nonumber \\
							& + \tfrac12|v_2\AND(w_1\Plus w_2)| + |v_1\AND v_2 \AND (w_1\Plus w_2)| \nonumber \\
							&+|w_1\AND w_2 \AND v_2| + \left|v_1\AND w_1 \AND (v_2 \Plus  w_2)\right| \pmod 2\,. \nonumber
\end{align}
\end{lemma}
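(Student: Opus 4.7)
The plan is to derive the formula by three applications of the cocycle identity (\ref{eq: code cocycle 1}), followed by a single use of the swap identity (\ref{eq: code cocycle 2}), working mod $2$ throughout so that the signs in (\ref{eq: code cocycle 1}) can be read as additions.

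First, I would apply (\ref{eq: code cocycle 1}) with $u = v_1$, $v = w_1$, $w = v_2 + w_2$ to ``peel off'' the mixed argument in the first slot:
\[
\theta(v_1\Plus w_1,v_2\Plus w_2) = \theta(w_1, v_2\Plus w_2) + \theta(v_1, w_1\Plus v_2\Plus w_2) + \theta(v_1,w_1) + |v_1\AND w_1\AND (v_2\Plus w_2)|.
\]
This already accounts for the summands $\theta(v_1,w_1)$ and $|v_1\AND w_1\AND(v_2\Plus w_2)|$ of the target expression; it remains to reduce the two $\theta$-values on the right.

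Next I would expand each residual $\theta$-value by a further application of (\ref{eq: code cocycle 1}), chosen so that the resulting ``pure'' terms match those on the right-hand side of (\ref{eq:theformula}). For $\theta(v_1, w_1\Plus v_2\Plus w_2)$, use $u=v_1$, $v=v_2$, $w=w_1\Plus w_2$; this produces $\theta(v_1,v_2)$, $\theta(v_1\Plus v_2, w_1\Plus w_2)$ and the triple-weight $|v_1\AND v_2\AND (w_1\Plus w_2)|$, plus a leftover $\theta(v_2, w_1\Plus w_2)$. For $\theta(w_1, v_2\Plus w_2)$, use $u=w_1$, $v=w_2$, $w=v_2$; this produces $\theta(w_1,w_2)$, $\theta(w_2,v_2)$ and $|w_1\AND w_2\AND v_2|$, plus a leftover $\theta(w_1\Plus w_2, v_2)$.

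Finally, the two leftover terms are precisely a commuted pair, so identity (\ref{eq: code cocycle 2}) gives
\[
\theta(v_2, w_1\Plus w_2) + \theta(w_1\Plus w_2, v_2) = \tfrac12|v_2\AND (w_1\Plus w_2)|,
\]
which supplies the last remaining summand of the target. Substituting back into the chain of expansions and collecting terms mod $2$ yields (\ref{eq:theformula}) exactly.

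The calculation is essentially bookkeeping once the three substitutions in (\ref{eq: code cocycle 1}) have been selected, and the ``hard part'' is really just identifying those choices so that every generated residual either appears in the target formula or cancels via (\ref{eq: code cocycle 2}); no use of the Moufang identity, of identity (\ref{eq: code cocycle 3}), or of the double-evenness of $\cC$ beyond what is built into (\ref{eq: code cocycle 1})--(\ref{eq: code cocycle 2}) is needed. Note that the decomposition hypothesis $\cC = V\oplus W$ plays no role in the algebraic derivation itself; it is used only implicitly, in that the formula is intended to be applied to a generic element $v_i\Plus w_i$ of $\cC$ written in terms of its unique summands.
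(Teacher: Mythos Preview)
Your proposal is correct and follows essentially the same approach as the paper: three applications of identity~(\ref{eq: code cocycle 1}) with exactly the substitutions you describe, followed by one use of~(\ref{eq: code cocycle 2}) to merge the leftover pair $\theta(v_2,w_1\Plus w_2)+\theta(w_1\Plus w_2,v_2)$. Your remark that the direct-sum hypothesis plays no role in the algebraic derivation is also accurate.
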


\begin{proof}
We apply the identity (\ref{eq: code cocycle 1}) three times and then the identity (\ref{eq: code cocycle 2}) once:
\begin{align*}
\begin{split}
	\theta(v_1+w_1,v_2+w_2) =&\ \theta(v_1,w_1) + \theta(v_1,v_2+w_1+w_2) + \theta(w_1,v_2+w_2) \\
	&+ |v_1\AND w_1\AND(v_2+w_2)| 
\end{split}\\
\begin{split}
	 =&\ \theta(v_1,w_1) + \big\{ \theta(v_1,v_2) + \theta(v_1+v_2,w_1+w_2)) \\ 
	 & + \theta(v_2,w_1+w_2) + |v_1\AND v_2 \AND (w_1+w_2)| \big\} \\
	 & + \big\{\theta(w_1,w_2) + \theta(w_1+w_2,v_2) + \theta(w_2,v_2) \\
	 & + |w_1\AND w_2\AND v_2| \big\} 
	  + |v_1\AND w_1\AND(v_2+w_2)| 
\end{split}\\
\begin{split}
	 =&\ \theta(v_1,w_1) + \big\{ \theta(v_1,v_2) + \theta(v_1+v_2,w_1+w_2)) \\ 
	 & +  |v_1\AND v_2 \AND (w_1+w_2)| \big\} \\
	 & + \big\{\theta(w_1,w_2) + \theta(w_2,v_2) + |w_1\AND w_2\AND v_2| \big\} \\
	 & + |v_1\AND w_1\AND(v_2+w_2)| + \tfrac12|v_2\AND (w_1+w_2)|. \qedhere
\end{split}
\end{align*}
\end{proof}

Observe that in Lemma~\ref{lemma:formula lemma}, on the right-hand side of (\ref{eq:theformula}), the code cocycle $\theta$ is only ever evaluated on vectors from the subset $V \cup W \subset \cC$.
This means we can throw away the rest of the array and still reconstruct arbitrary values of $\theta$ using (\ref{eq:theformula}).
If we assume that $\cC$ is $2k$-dimensional, and that $V$ and $W$ are both $k$-dimensional, then the domain of the restricted $\theta$ has $(2^k+2^k - 1)^2 = 2^{2(k+1)} - 2^{k+2} + 1 = O((2^k)^2)$ elements. 
Compared to the full domain of $\theta$, which has $2^{2k}\times 2^{2k} = (2^k)^4$ elements, this is roughly a square-root saving.

However, one heuristic for choosing the subspaces $V$ and $W$ is to aim for a reduction in the apparent randomness of the plot of the restricted code cocycle, or equivalently, less-granular structural repetition. 
This is true, even if the size of $V\cup W$ is not minimized by choosing $V$ and $W$ to have dimension $(\dim\cC)/2$ (or as close as possible).

Now it should be clear why the Golay code basis in Example~\ref{example:Golay} was partitioned into two lists of six vectors: we can reconstruct all $16\,777\,216$ values of the resulting code cycle $\theta$, and hence the multiplication in Parker's loop, from a mere $2^{14} - 2^8 + 1 = 16\,129$ values.
The span of the left column of vectors in Example~\ref{example:Golay} is the subspace $V\subset \cG$, and the span of the right column of vectors is $W\subset \cG$.

The top left quadrant of Figure~\ref{fig:Parker cocycle} then contains the restriction of $\theta$ to $V\times V$, and the bottom right quadrant the restriction to $W\times W$. 
The off-diagonal quadrants contain the values of $\theta$ restricted to $V\times W$ and $W\times V$.
\begin{figure}[!b]
\begin{center}
\includegraphics[width=0.7\textwidth]{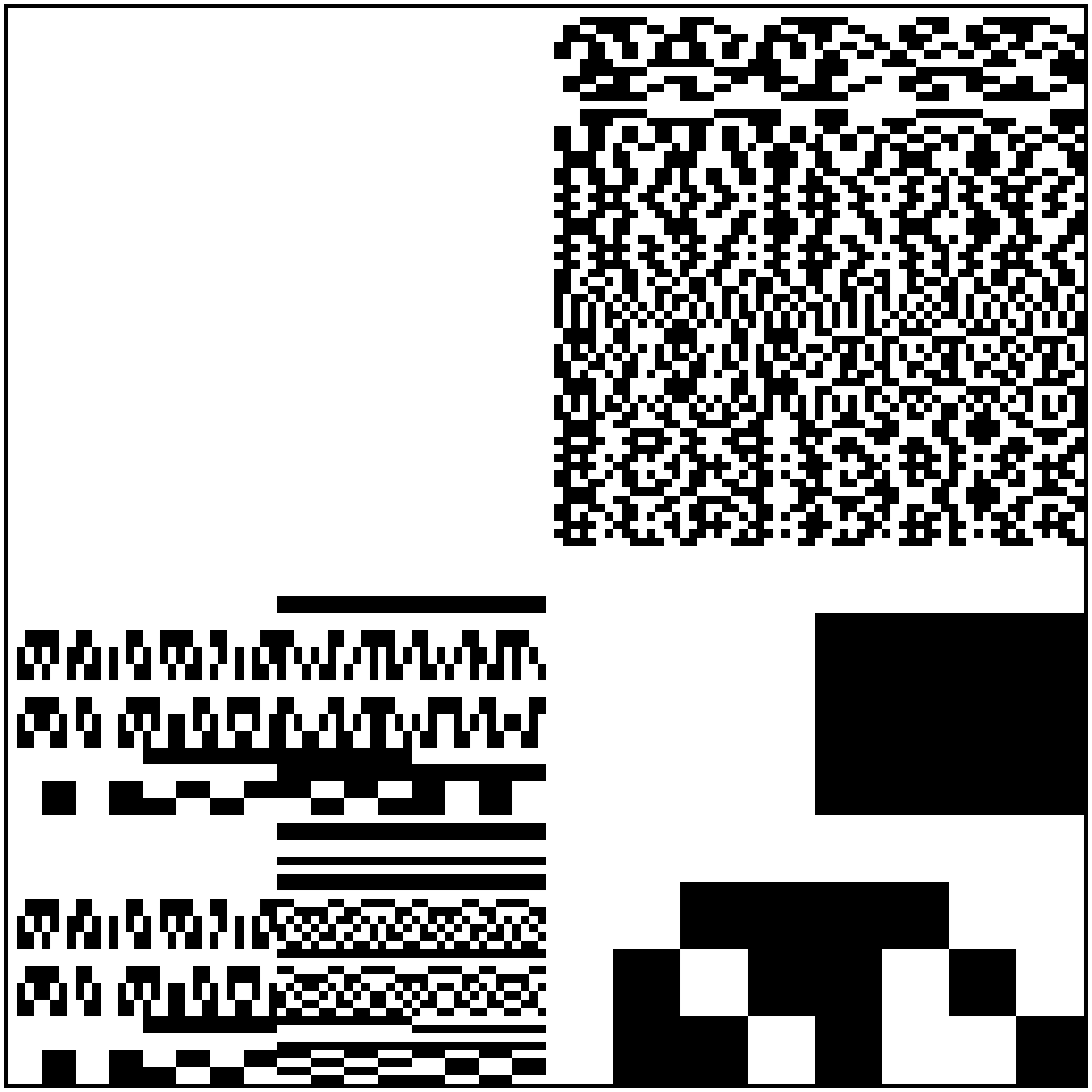}
\end{center}
\caption{The restriction of the code cocycle $\theta$ for Parker's loop to $(V\cup W)^2$. A machine-readable version is available in \cite{RN_GH} or as an arXiv ancillary file. The order of the row/column labels is $\underline{0},b_1,b_2,b_1+b_2,b_3,\ldots, b_1+\cdots +b_6$, $b_7,b_8,b_7+b_8,\ldots,b_7+\cdots + b_{12}$.}
\label{fig:Parker cocycle}
\end{figure}
From Figure~\ref{fig:Parker cocycle} and formula (\ref{eq:loop_extension_prod}) we can see that the restriction $\cP\big|_V$ of Parker's loop is a direct product (hence is a group), as $\theta\big|_{V\times V}$ is identically zero. 
Moreover, the restriction of $\cP\big|_W$ is isomorphic to the direct product $(\FF_2)^3 \times M$, where $M$ is the Moufang loop from Example~\ref{example:m16}.
This is because what was a single pixel in Figure~\ref{fig:cocycle for M} is now an $8\times 8$ block of pixels in the lower right quadrant of Figure~\ref{fig:Parker cocycle}.

\section{Discussion and comparison.}\label{sec:discussion}

Parker's loop $\cP$ is an algebraic structure that has spawned a small industry trying to understand and construct code loops in general, as they form a class of Moufang loops that are relatively easy to describe.
Parker's original treatment of this class of Moufang loops used vector spaces $V$ over $\FF_2$ equipped with a function $V \to \FF_2$ satisfying certain conditions analogous to those in Definition~\ref{def:codeLoop} (see \cite[Definition~13]{Griess}).

Aside from Parker's approach and the description by Griess using code cocycles, there was an unpublished thesis \cite{Johnson}, characterizations as loops with specified commutators and associators \cite{CheinGoodaire}, an iterative construction using centrally twisted products \cite{Hsu}, and a construction using groups with triality \cite{Nagy}. 
The LOOPS library \cite{loops3.4.1} for the software package GAP \cite{GAP4} contains all the code loops of order $64$ and below, although ``the package is intended primarily for quasigroups and loops of small order, say up to $1000$''.
Even the more recent \cite{OBrien_Vojtechovsky}, which classifies code loops up to order $512$ in order to add them to the LOOPS package, falls short of the $8192$ elements of Parker's loop; the authors say ``our work suggests that it will be difficult to extend the classification of code loops beyond order $512$''.
In principle, there is nothing stopping the construction of $\cP$ in LOOPS, but it will essentially be stored as a multiplication table, which would comprise $67\,108\,864$ entries, each of which is a $13$-bit element label.
The paper \cite{Morier-Genoud_Ovsienko} describes an algebraic formula for a code cocycle that will build Parker's loop, as a combination of the recipe in the proof of Proposition~6.6 and the generating function in Proposition~9.2 appearing there.
This formula is a polynomial with $330$ cubic terms and $12$ quadratic terms in $24$ variables, being coefficients of basis vectors of $\cG$. 
To compare, combined with the small amount of data in Figure~\ref{fig:Parker cocycle} together with a labelling of rows/columns by words of the Golay code, Lemma~\ref{lemma:formula lemma} only requires eight terms, of which four are cubic and the rest come from the $16\,129$ stored values of $\theta$ (the term $\theta(v_1,v_2)$ vanishes identically, for $\cP$).
Since large-scale computation in large code loops (for instance in a package like LOOPS) requires optimizing the binary operation, we have found a space/time trade-off that vastly improves on existing approaches.

In addition to computational savings, the ability to visually explore the structure of code loops during experimentation more generally is a novel advance---the recognition of $(\FF_2)^3\times M$ inside $\cP$ was purely by inspection of the picture of the code cocyle then consulting the (short) list of Moufang loops of small order in \cite{Chein}.
Discovery of the basis in Example~\ref{example:Golay} occurred by walking through the spaces of bases of subcodes and working with the heuristic that more regularity in the appearance of the code cocycle is better.
Additionally, our software flagged when subloops thus considered were in fact associative, and hence a group, leading to the discovery of the relatively large elementary subgroups $(\FF_2)^7 < \cP$ and $(\FF_2)^6 < (\FF_2)^3\times M < \cP$. 

Finally, one can also remark that because of the identity (\ref{eq: code cocycle 2}), one can replace the formula in Lemma~\ref{lemma:formula lemma} by one that is only ever evaluated on $V^2$, $W^2$, or $V\times W$, with just one more term. This allows the reconstruction the top right quadrant of Figure~\ref{fig:Parker cocycle} from the bottom left quadrant. 
Thus one can describe Parker's loop as being generated by the subloops $(\FF_2)^7$ and $(\FF_2)^3\times M$, with relations coming from the information contained in the bottom left quadrant of Figure~\ref{fig:Parker cocycle}, and the formulas (\ref{eq: code cocycle 2}) and (\ref{eq:theformula}).
The apparent structure in that bottom left quadrant is intriguing, and perhaps indicative of further simplifications; this will be left to future work.

\begin{algorithm}[!hb]
\small
\caption{Reverse-engineered from proof of \cite[Theorem 10]{Griess}.}\label{Griess algo}
\SetAlgoLined
\DontPrintSemicolon
\KwData{Basis $B = \{b_0,b_1,\ldots,b_{k-1}\}$ for the code $\cC$}
\KwResult{Code cocycle $\theta\colon \cC\times \cC\to \FF_2$, encoded as a square array of elements from $\FF_2$, with rows and columns indexed by $\cC$ }
\;
\tcp{Initialise} 
\ForAll{$c_1,c_2 \in C$}{
$\theta(c_1,c_2) \leftarrow 0$\;
}

$\theta(b_0,b_0) \leftarrow \tfrac14\left|b_0\right|$\;
\;
\ForAll{$1\leq i\leq \Length(B)$}{
	Define $V_i :=\Span\{b_0,\ldots,b_{i-1}\}$\;
	\tcp{(D1) define theta on \{bi\} x Vi then deduce on Vi x \{bi\}}
	\ForAll{$v\in V_i$}{
		\eIf{$v\neq0$}{
			$\theta(b_i,v) \leftarrow \text{random}$ \tcp*{In practice, random = 0}
			$\theta(v,b_i) \leftarrow \tfrac12\left|v\AND b_i\right|+\theta(b_i,v)$\;
		}
		{
			\tcp{$\theta(b_i,v)$ is already set to 0}
			$\theta(v,b_i)\leftarrow \tfrac12\left|v\AND b_i\right|$\;
		}
	}
	\tcp{(D2) deduce theta on \{bi\} x Wi and Wi x \{bi\}}
	\ForAll{$v\in V_i$}{
		$\theta(b_i,b_i\Plus v) \leftarrow \tfrac14\left|b_i\right| + \theta(b_i,v)$\;
		$\theta(b_i\Plus v,b_i) \leftarrow \tfrac12\left|b_i\AND (b_i\Plus v)\right| + \tfrac14\left|b_i\right| + \theta(b_i,v)$\;
	}
	\tcp{(D3) deduce theta on Wi x Wi}
	\ForAll{$v_1\in V_i$}{
		\ForAll{$v_2\in V_i$}{
			$w\leftarrow b_i \Plus v_2$\;
			$a\leftarrow \theta(v_1,b_i)$\;
			$b\leftarrow \theta(v_1,b_i\Plus w)$\;
			$c\leftarrow \theta(w,b_i)$\;
			$r \leftarrow \tfrac12\left|v_1\AND w\right| + a + b + c$\;
			$\theta(w,b_i\Plus v_1) \leftarrow r$
		}
	}
	\tcp{(D4) deduce theta on Wi x Vi and Vi x Wi}
	\ForAll{$v_1\in V_i$}{
		\ForAll{$v_2\in V_i$}{
			$w\leftarrow b_i \Plus v_2$\;
			$a\leftarrow \theta(w,v_1\Plus w)$\;
			$\theta(w,v_1) \leftarrow \tfrac14\left|w\right| + a$\;
			$\theta(v_1,w) \leftarrow \tfrac12\left|v_1\AND w\right| + \tfrac14\left|w\right| + a$\;
		}
	}
}
\end{algorithm}

\section{Acknowledgments.}
DMR is supported by the Australian Research Council's Discovery Projects scheme (project number DP180100383), funded by the Australian Government.
The authors thank an anonymous referee for providing Coxeter's example and clarifying the details of Parker's construction.

\bibliographystyle{amsalpha}
\bibliography{codeloops}
\end{document}